\documentclass[11pt]{article}

\usepackage{graphicx}
\usepackage{flafter}
\usepackage{geometry}
\geometry{}
\setlength{\topmargin}{0.25cm}
\setlength{\oddsidemargin}{0.25cm}
\setlength{\textwidth}{16cm}
\setlength{\textheight}{22cm}

\usepackage{amsmath}
\usepackage{amsthm}
\usepackage{mathrsfs}
\usepackage{amsfonts}
\usepackage{amssymb}
\usepackage{enumerate}
\usepackage{tikz}
\usepackage{ytableau}
\allowdisplaybreaks
\usepackage{verbatim}
\usepackage[colorlinks, linkcolor=blue, anchorcolor=blue, citecolor=blue, urlcolor=blue]{hyperref}
\usepackage{color}
\usepackage{graphicx}
\usepackage{tikz}
\usepackage{soul, ulem, cancel}

\makeatletter \@addtoreset{equation}{section}

\newcommand{\qbinomial}[2]{\genfrac{[}{]}{0pt}{}{#1}{#2}_q}

\newtheorem{thm}{Theorem}[section]

\newtheorem{coro}{Corollary}[section]

\newtheorem{conj}{Conjecture}[section]

\voffset = -40pt
\footskip = 25pt

\linespread{1.2}

\begin{document}

\title{Analytic properties arising from the Baxter numbers}

\author{
	Hanqian Fang\footnote{KLMM, Academy of Mathematics and Systems Science, Chinese Academy of Sciences, Beijing 100190, P.R. China.  Email: fanghanqian22@mails.ucas.ac.cn.},\
	Candice X.T. Zhang\footnote{KLMM, Academy of Mathematics and Systems Science, Chinese Academy of Sciences, Beijing 100190, P.R. China. Email: zhangxutong@amss.ac.cn.},\
	James J.Y. Zhao\footnote{School of Accounting, Guangzhou College of Technology and Business,
		Foshan 528138, P.R. China.  Email: zhao@gzgs.edu.cn.}}
\maketitle

\begin{abstract}
	Baxter numbers are known as the enumeration of Baxter permutations and numerous other discrete structures, playing a significant role across combinatorics, algebra, and analysis.
	In this paper, we focus on the analytic properties related to Baxter numbers.
	We prove that the descent polynomials of Baxter permutations have interlacing zeros, which is a property stronger than real-rootedness.
	Our approach is based on Dilks' framework of $(q,t)$-Hoggatt sums, which is a $q$-analog for Baxter permutations.
	Within this framework, we show that the family of $(1,t)$-Hoggatt sums satisfies the interlacing property using fundamental results on Hadamard products of polynomials.
	For Baxter numbers, we prove their asymptotic $r$-log-convexity via asymptotic expansions of $P$-recursive sequences. In particular, we confirm their $2$-log-convexity using symbolic computation techniques.
	
	\noindent {\bf AMS Classification 2020:} {26C10; 11B83; 05A10}
	
	\noindent {\bf Keywords:} Baxter number; Hoggatt sum; Sturm sequence; asymptotic $r$-log-convexity; Hadamard product
\end{abstract}

\section{Introduction}

The Baxter permutations were first introduced by Glen Baxter~\cite{Baxter1964} in 1964 in the context of studying the fixed points of the composition of commuting functions. A Baxter permutation on $[n]$ is a permutation $\pi=\pi_1\pi_2\cdots\pi_n\in \mathfrak{S}_n$ which avoids all subsequences $\pi_{i}\pi_j\pi_{j+1}\pi_k$ (for indices $i<j<k-1$) satisfying either of the following patterns:
\[\pi_{j+1}<\pi_i<\pi_k<\pi_j\quad\text{ or }\quad\pi_{j}<\pi_k<\pi_i<\pi_{j+1}.\]
These permutations have significance in combinatorics, algebra and analysis. Various bijections have been built between Baxter permutations and other combinatorial objects, such as twisted Baxter permutations~\cite{Law-Reading2012}, 
pairs of twin binary trees on $n$ nodes~\cite{Giraudo2012}, plane partitions in a box~\cite{Dilks2012}, and so on.
These objects form the well-known ``Baxter family". In addition, it turns out that these objects possess elegant algebraic properties and thus become suitable candidates to index the bases of some Hopf algebras.
For more details, see~\cite{Reading2005,Law-Reading2012,Chatel-Pilaud2017}.
From an analytic perspective, inspired by the bijection between Baxter permutations and plane partitions in a box, Dilks~\cite{Dilks2015} gave a natural $q$-analog for Baxter permutations, called the $(q,t)$-Hoggatt sums, and further proved their $q$-gamma nonnegativity.
From a matrix perspective, Mao and Shi~\cite{Mao-Shi2025} studied various properties on log-behaviors in the $m$-Hoggatt triangle, whose row generating functions are the $(1,t)$-Hoggatt sums. Furthermore, they also obtained the asymptotic normality of the row sequences and total positivity of the $m$-Hoggatt triangle.
In this paper, we delve deeper into the analytic properties arising from the Baxter permutations.

Baxter permutations are counted by {Baxter numbers} $B_n$~\cite[\textsc{A001181}]{oeis} with the formula
\begin{equation}\label{eq:B_n}
	B_n = \sum_{k=1}^n\frac{2}{n(n+1)^2}\binom{n+1}{k-1} \binom{n+1}{k} \binom{n+1}{k+1}.
\end{equation}
This formula was first established by Chung {\it et al.}~\cite{CGHK1978} with analytical methods, and later by Viennot~\cite{Viennot1981} through combinatorial bijections. The sum terms in Equation~\eqref{eq:B_n}, defined as 
\[D_{n,\,k}=\frac{2}{n(n+1)^2}\binom{n+1}{k-1} \binom{n+1}{k} \binom{n+1}{k+1},\]
also have corresponding combinatorial interpretations within the Baxter family. Mallows~\cite{Mallows1979} first showed that $D_{n,\,k}$ counts the reduced Baxter permutations on $[n]$ with exactly $k$ rises. Felsner \textit{et al.} further presented a unifying presentation for the bijections between the objects in Baxter family, which implies that $D_{n,\,k}$ also counts the Baxter permutations on $[n]$ with $k-1$ descents and $n-k$ rises~\cite[Proposition 6.8]{FFNO2011}, 
twin pairs of binary trees with $k$ left leaves and $n-k+1$ right leaves~\cite[Theorem 5.6]{FFNO2011},
and so on. These results demonstrate that the study of the numbers $D_{n,\,k}$ is not only intriguing in its own right, but also crucial for understanding Baxter numbers $B_n$. Therefore, we define the {\it Baxter polynomial} $PB_n(t)$ to be the generating function of $D_{n,\,k}$, i.e.,
\begin{equation*}
	PB_n(t):
	= \sum_{k=1}^n D_{n,\,k} t^{k-1}
	= \sum_{k=1}^n \frac{2}{n(n+1)^2}\binom{n+1}{k-1} \binom{n+1}{k}
	\binom{n+1}{k+1} t^{k-1}
\end{equation*}
for $n\geq 1$. Note that $PB_n(t)$ can be equivalently interpreted as the descent polynomials of Baxter permutations.

As stated above, Dilks~\cite{Dilks2015} placed Baxter polynomials within a broader framework of the $(q,t)$-Hoggatt sums, defined by
\begin{align*}
	H_n^{[m]}(q,t)=\sum_{\substack{k+l=n-1}}M(k,l,m;q)t^k,
\end{align*}
where
\begin{align*}
	M(k,l,m;q)=q^{m\binom{k+1}{2}}\sum_{\pi\in\textrm{PP}_{k,\,l,\,m}}q^{|\pi|},
\end{align*}
and $\textrm{PP}_{k,\,l,\,m}$ denotes the set of all plane partitions in a $k\times l \times m$ box.
These polynomials are the $q$-analogs of the Hoggatt sums, originally introduced by Fielder and Alford~\cite{Fielder-Alford-1989} in the study of Hoggatt's conjecture related to Pascal's triangle and Baxter permutations.
For the special case $q=1$, the $(1,t)$-Hoggatt sum reduces to
\begin{align*}
	H_n^{[m]}(1,t)=\sum_{k=0}^{n-1}\frac{\prod_{i=0}^{m-1}\binom{n+m-2}{k+i}}{\prod_{j=1}^{m-1}\binom{n+m-2}{j}}t^k,
\end{align*}
where the coefficient counts the number of plane partitions fitting in a $k\times (n-1-k) \times m$ box, see~\cite[Theorem 7.21.7]{Stanley2001}.
Notably, when $m=3$, the polynomial $H_n^{[m]}(1,t)$ specializes to the Baxter polynomial $PB_n(t)$. In the remainder of this paper, we call the $(1,t)$-Hoggatt sums $H_n^{[m]}(1,t)$ the {\it Hoggatt polynomials}.

For all $m\ge 1$, the real-rootedness of the Hoggatt polynomials $H_n^{[m]}(1,t)$ has been obtained by Dilks~\cite[Theorem 3.8]{Dilks2015} via the theory of multiplier sequences~\cite{Branden-2006}.
Motivated by this, one of the main objectives of this paper is to explore the interlacing property of the polynomial sequence $\{H_n^{[m]}(1,t)\}_{n\geq1}$ for any fixed $m\ge 1$. 

For the purpose of describing our main theorem, we need the following definitions and notations. Given two real-rooted polynomials $f(t)$, $g(t)$ with real coefficients, let $\{u_i\}$ and $\{v_j\}$ be all zeros of $f(t)$ and $g(t)$ in weakly decreasing order, respectively.
We say that $g(t)$ {interlaces} $f(t)$ if $\deg f(t)=\deg g(t)+1=n$ and
\begin{align}\label{eq:def-interlac-2}
	{u_{n}\leq v_{n-1}\leq u_{n-1}\leq \cdots
		\leq v_{2}\leq u_{2}\leq v_{1}\leq u_{1}}.
\end{align}
We say that $g(t)$ {alternates left of} $f(t)$ if $\deg f(t)=\deg g(t)=n$ and
\begin{align}\label{eq:def-interlac-1}
	{v_n\leq u_n\leq v_{n-1}\leq u_{n-1}\leq \cdots
		\leq v_2\leq u_2\leq v_1\leq u_1}.
\end{align}
Denote by $g(t)\preceq f(t)$ if either $g(t)$ interlaces $f(t)$ or $g(t)$ alternates left of $f(t)$.
We say that $g(t)$ {strictly interlaces} (resp., {strictly alternates left of}) $f(t)$ if no equality holds in~\eqref{eq:def-interlac-2} (resp.,~\eqref{eq:def-interlac-1}).
Write $g(t)\prec f(t)$ if $g(t)$ strictly interlaces or strictly alternates left of $f(t)$.
For convenience, we also let $a\preceq bx+c$ for any nonnegative real numbers $a,b,c$, and $f(t)\preceq 0$, $0\preceq f(t)$ for any real polynomial $f(t)$ with only real zeros.
A sequence of real-rooted polynomials $\{f_n(t)\}_{n\geq 0}$ is said to be a {Sturm sequence} if
$$
{f_0(t)\prec f_1(t)\prec \cdots \prec  f_n(t)\prec \cdots},
$$
and it is said to be a {generalized Sturm sequence} if
$$
{f_0(t)\preceq f_1(t)\preceq \cdots \preceq  f_n(t)\preceq \cdots}.
$$
We can now give our main results as follows.
\begin{thm}\label{thm:interlace-PBn}
	The sequence of Baxter polynomials $\{PB_n(t)\}_{n\geq 1}$ is a Sturm sequence.
\end{thm}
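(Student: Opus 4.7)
Since $PB_n(t)=H_n^{[3]}(1,t)$, Theorem~\ref{thm:interlace-PBn} is the $m=3$ specialization of the more general claim that $\{H_n^{[m]}(1,t)\}_{n\ge 1}$ is a (generalized) Sturm sequence. The plan is to prove this general claim via the Hadamard-product structure of the Hoggatt polynomials and then specialize.

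The starting point is the factorization: up to the positive scalar $\prod_{j=1}^{m-1}\binom{n+m-2}{j}$, the Hoggatt polynomial $H_n^{[m]}(1,t)$ equals the Hadamard product of the $m$ truncated-binomial polynomials
$$f_{n,i}(t):=\sum_{k=0}^{n-1}\binom{n+m-2}{k+i}t^k,\qquad i=0,1,\ldots,m-1.$$
First, I would formulate and prove an interlacing-preservation lemma for Hadamard products: under appropriate positivity and zero-location hypotheses met by the factors at hand, if two polynomial families $\{g_n\}$ and $\{h_n\}$ are generalized Sturm sequences, then so is $\{g_n\ast h_n\}$. This would be built from the Malo--Schur--Szeg\H{o} theorem and its refinements, upgraded to track not only real-rootedness but also how the zero locations shift between consecutive values of $n$. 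With such a lemma in hand, I would identify a base sequence (for example, a shifted power of $(1+t)$, or one of the Sturm families arising in Dilks's multiplier-sequence framework) whose Sturm property is elementary, and then iteratively apply the Hadamard-product lemma to lift that property onto $\{H_n^{[m]}(1,t)\}$. Once generalized Sturm is established for $m=3$, strict positivity of the coefficients of $PB_n(t)$ together with the degree relation $\deg PB_{n+1}=\deg PB_n+1=n$ forces all zeros of $PB_n(t)$ to be simple and strictly separated by those of $PB_{n+1}(t)$, upgrading generalized Sturm to the Sturm property sought.

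The main obstacle will be the interlacing-preservation lemma. Malo--Schur--Szeg\H{o} guarantees only real-rootedness of a single Hadamard product and says nothing directly about interlacing between the products at index $n$ and index $n+1$; worse, the individual factors $f_{n,i}(t)$ need not themselves be real-rooted (for instance $f_{3,0}(t)=1+4t+6t^2$ when $m=3$ has complex zeros), so one cannot naively iterate the classical statement factor by factor. The correct formulation of the lemma must therefore treat the full Hadamard product as a single unit and exploit both positivity of coefficients and the combinatorial structure of binomial shifts, with the role of interlacing input carried by a carefully chosen factor whose Sturm property can be verified directly. Getting this formulation right, and identifying the right ``seed'' factor to carry the interlacing through the product, is the technical heart of the proof.
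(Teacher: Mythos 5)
Your overall framework --- Hadamard products of interlacing polynomials, in the spirit of Garloff--Wagner's refinement of Mal\'o--Schur--Szeg\H{o} --- is the same one the paper uses, and you have correctly diagnosed the central obstruction: the individual shifted-binomial factors $f_{n,i}(t)=\sum_k\binom{n+m-2}{k+i}t^k$ are not real-rooted, so one cannot iterate the product lemma factor by factor. But your proposal stops at naming this obstruction (``getting this formulation right \ldots is the technical heart of the proof'') without resolving it, and the resolution is not a reformulation of the Hadamard lemma at all; it is a different algebraic decomposition that you are missing. Writing $F_{n,m}(t)=\sum_{k}\binom{n}{k}\binom{n}{k+1}\cdots\binom{n}{k+m-1}t^k$, the paper never expresses $F_{n,m}$ as a Hadamard product of the $m$ shifted factors. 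Instead it sets $G_{n,m}(t)=F_{n,1}(t)\ast F_{n-1,m-1}(t)$ --- a Hadamard product of only \emph{two} polynomials, each genuinely real-rooted: $F_{n,1}(t)=(1+t)^n$, and $F_{n-1,m-1}$ by the induction on $m$ --- and then uses the identity $(k+1)\binom{n}{k+1}=n\binom{n-1}{k}$ to show $G_{n,m}'(t)=nF_{n-1,m}(t)$. Differentiation is what produces the index shift that turns the un-shifted product $\binom{n}{k}\binom{n-1}{k}\cdots\binom{n-1}{k+m-2}$ into the fully shifted string $\binom{n-1}{k}\binom{n-1}{k+1}\cdots\binom{n-1}{k+m-1}$, sidestepping the non-real-rooted factors entirely.

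This also means your toolkit is missing a second essential ingredient: a theorem that differentiation preserves strict interlacing (Markov's theorem, Theorem~\ref{thm:Markov} in the paper). The chain is: induction hypothesis $F_{n-1,m-1}\prec F_{n,m-1}$ plus $(1+t)^{n}\preceq(1+t)^{n+1}$ gives $G_{n,m}\prec G_{n+1,m}$ by the Hadamard product theorem; then Markov gives $G_{n,m}'\prec G_{n+1,m}'$, i.e.\ $F_{n-1,m}\prec F_{n,m}$. Without the derivative identity and Markov's theorem your plan has no mechanism for propagating interlacing from level $m-1$ to level $m$, and the ``seed factor'' you hope to identify does not exist in the form you describe. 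Your closing remark about upgrading generalized Sturm to Sturm via positivity of coefficients is also not how strictness is obtained; strictness comes from part (3) of the Garloff--Wagner theorem (the gcd of the two Hadamard products is $1$ because the factors have nonzero constant terms and differ in degree), not from an a posteriori perturbation argument. As it stands the proposal is a correct identification of the right literature and the right difficulty, but not a proof.
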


In fact, we obtain a more general result in the framework of Hoggatt polynomial sequence $\{H_n^{[m]}(1,t)\}_{n\ge 1}$ by induction on $m$.

\begin{thm}\label{thm:interlace-HP}
	For $m=1$, the sequence of Hoggatt polynomials $\{H_n^{[1]}(1,t)\}_{n\geq 1}$ is a generalized Sturm sequence.
	For any integer $m\geq 2$, the sequence $\{H_n^{[m]}(1,t)\}_{n\geq 1}$ is a Sturm sequence.
\end{thm}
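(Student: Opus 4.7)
Our plan is to prove Theorem~\ref{thm:interlace-HP} by induction on $m$, following the path foreshadowed immediately after its statement. For $m=1$, direct computation yields
\[H_n^{[1]}(1,t) \;=\; \sum_{k=0}^{n-1}\binom{n-1}{k}t^k \;=\; (1+t)^{n-1},\]
so every member of the family has only the single zero $t=-1$ (with multiplicity $n-1$); the interlacing conditions therefore hold with equality throughout, giving a generalized (but not strict) Sturm sequence as claimed. For the base case $m=2$ of the induction on $m$, the coefficient $[t^k]H_n^{[2]}(1,t) = \tfrac{1}{n}\binom{n}{k}\binom{n}{k+1}$ identifies $H_n^{[2]}(1,t)$ with the Narayana polynomial up to normalization, and the Sturm property of the Narayana sequence is classical.

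Assume now inductively that $\{H_n^{[m]}(1,t)\}_{n\ge 1}$ is a Sturm sequence for some $m\ge 2$. A short algebraic manipulation of the coefficient formula --- using the ratio $\binom{n+m-1}{k+i}/\binom{n+m-2}{k+i}=(n+m-1)/(n+m-1-k-i)$ and telescoping --- yields the Hadamard-product identity
\[H_n^{[m+1]}(1,t) \;=\; \gamma_{n,m}\,Q_n^{[m]}(t)\star H_n^{[m]}(1,t),\qquad Q_n^{[m]}(t)\;=\;\sum_{k=0}^{n-1}\binom{n+2m-1}{k+m}t^k,\]
where $\star$ denotes the coefficientwise Hadamard product and $\gamma_{n,m}$ is independent of $k$. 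The plan is then to invoke a Schur--Malo--Szeg\H{o}-type theorem on Hadamard products to transfer real-rootedness, together with its interlacing-preserving refinement (in the spirit of Wagner's work or Br\"and\'en's framework of compatible polynomials) to pass the Sturm structure from the $m$-family to the $(m+1)$-family.

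The principal obstacle is that $Q_n^{[m]}(t)$, a central truncation of $(1+t)^{n+2m-1}$, is not real-rooted in general --- already $Q_3^{[1]}(t)=4+6t+4t^2$ has negative discriminant --- so a one-shot application of Schur's theorem is not available. To circumvent this I would seek a finer decomposition of $H_n^{[m+1]}(1,t)$ as an iterated Hadamard product whose factors are provably real-rooted with only nonpositive zeros. A natural candidate is to pair the binomial factors $\binom{N}{k+i}\binom{N}{k+m-i}$ (with $N=n+m-1$) into Narayana-type pieces $\sum_k\binom{N}{k}\binom{N}{k+j}t^k$, whose real-rootedness for the relevant small $j$ can be established by classical identities; an alternative is to exploit the multiplier-sequence description that Dilks uses to establish real-rootedness in the $(q,t)$-framework and then specialize at $q=1$. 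The delicate point, and the main difficulty of the argument, will be to verify that every step of this decomposition preserves not just real-rootedness but strict interlacing, and that all degrees align so that the resulting chain of $\preceq$-relations upgrades to the strict $\prec$-relations needed for a Sturm sequence.
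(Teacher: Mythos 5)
Your base case $m=1$ is fine, and your Hadamard identity relating $H_n^{[m+1]}(1,t)$ to $H_n^{[m]}(1,t)$ is arithmetically correct. But you have correctly diagnosed the fatal flaw in your own plan and then not repaired it: the cofactor $Q_n^{[m]}(t)$, being a central truncation of $(1+t)^{n+2m-1}$, is not real-rooted, so neither the Schur--Mal\'o theorem nor the Garloff--Wagner interlacing refinement applies to that factorization. Everything after that observation --- ``seek a finer decomposition,'' ``pair the binomial factors into Narayana-type pieces,'' ``exploit the multiplier-sequence description'' --- is a list of candidate strategies, none of which is carried out and none of which obviously closes the gap. (The multiplier-sequence route is essentially Dilks' original proof of real-rootedness; it is not clear it yields the \emph{interlacing} between consecutive $n$, which is the actual content of the theorem.) Also note that your induction is on $m$ with $n$ fixed, whereas the statement requires interlacing between consecutive members in $n$ for each fixed $m$; your write-up never isolates the step $H_{n-1}^{[m]} \prec H_{n}^{[m]}$ that has to be proved.

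The missing idea is to insert a differentiation between the Hadamard product and the target polynomial. Setting $F_{n,\,m}(t)=\sum_k \binom{n}{k}\binom{n}{k+1}\cdots\binom{n}{k+m-1}t^k$ (so that $H_n^{[m]}(1,t)$ is a positive multiple of $F_{n+m-2,\,m}(t)$), one forms
\begin{equation*}
G_{n,\,m}(t)=F_{n,\,1}(t)*F_{n-1,\,m-1}(t)=\sum_{k}\binom{n}{k}\binom{n-1}{k}\binom{n-1}{k+1}\cdots\binom{n-1}{k+m-2}t^k ,
\end{equation*}
whose two Hadamard factors \emph{are} real-rooted with only negative zeros ($F_{n,\,1}=(1+t)^n$, and $F_{n-1,\,m-1}$ by the induction on $m$), so Garloff--Wagner gives $G_{n,\,m}\prec G_{n+1,\,m}$ from $F_{n,\,1}\preceq F_{n+1,\,1}$ and $F_{n-1,\,m-1}\prec F_{n,\,m-1}$. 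The identity $(k+1)\binom{n}{k+1}=n\binom{n-1}{k}$ then yields $G_{n,\,m}'(t)=n\,F_{n-1,\,m}(t)$, and Markov's theorem (differentiation preserves strict interlacing) transfers $G_{n,\,m}\prec G_{n+1,\,m}$ to $F_{n-1,\,m}\prec F_{n,\,m}$. This is exactly the maneuver that sidesteps the non-real-rooted truncation you ran into: one never Hadamard-multiplies by a truncated binomial row, only by a full one, and the extra binomial factor $\binom{n}{k}$ is converted into the needed shift by one differentiation. Your $m=2$ base case is subsumed by the same computation ($(n+1)F_{n,\,2}=(F_{n,\,1}*F_{n+1,\,1})'$), so you do not need to import the Sturm property of Narayana polynomials as an external fact.
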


Since a (generalized) Sturm sequence must be a real-rooted polynomial sequence, we also provide an alternative proof for the real-rootedness result established by Dilks~\cite[Theorem 3.8]{Dilks2015}.
Note that the coefficients of a real-rooted polynomial sequence are usually asymptotically normal. For instance, the asymptotic normality of the numbers $D_{n,\,k}$ was obtained by Zhao~\cite{Zhao}. Recently, Mao and Shi~\cite{Mao-Shi2025} extended this result to the coefficients $M(k,n-k-1,m;1)$ of Hoggatt polynomials.
In addition, the real-rootedness of a polynomial also implies the log-concavity of its coefficient sequence via the well-known Newton inequality.
Furthermore, if this polynomial has only negative zeros, Br\"{a}nd\'{e}n~\cite{Branden2011} showed that its coefficient sequence is infinitely log-concave, a stronger definition introduced by Boros and Moll~\cite{Boros-Moll-2004}. Utilizing this result, Mao and Shi~\cite[Theorem 2.3]{Mao-Shi2025} remarked that the sequence $\{M(k,n-k-1,m;1)\}_{k=0}^{n-1}$ is infinitely log-concave for any integers $n\geq 1$ and $m\geq 1$. As a special case, this implies the infinite log-concavity of the sequence $\{D_{n,\,k}\}_{k=1}^n$.

Our second main result focuses on the log-convexity property, a dual notion of log-concavity, of the Baxter numbers $B_n$.
Recall that an infinite sequence $\{a_n\}_{n \ge 0}$ is said to be {log-convex} if $a_n^2\leq a_{n-1}a_{n+1}$ for $n\ge 1$.
Let $\mathscr{L}$ be the operator defined by $$\mathscr{L}\left(\{ a_n\}_{n\ge 0}\right)=\{a_na_{n+2}-a_{n+1}^2 \}_{n\ge 0}.$$
A sequence $\{a_n\}_{n\ge 0}$ is said to be {$r$-log-convex} if the $j$-th iterative sequence $\mathscr{L}^j\left(\{a_n\}_{n\ge 0}\right)$ is nonnegative for all $1\le j \le r$.
Furthermore, $\{a_n\}_{n\ge 0}$ is said to be {infinitely log-convex} if it is $r$-log-convex for all $r\ge 1$.
Hou and Zhang~\cite{Hou-Zhang2019} introduced the concept of {asymptotic $r$-log-convexity} of $\{a_n\}_{n\geq 0}$ which requires that for each $1\leq j\leq r$, the sequence $\mathscr{L}^j\left(\{a_n\}_{n\geq 0}\right)$ is nonnegative for all but finitely many initial terms. The log-convexity of the Baxter number sequence $\{B_n\}_{n\geq 1}$ was obtained in~\cite[Theorem 4.18]{Doslic-Veljan2008} by Do\v{s}li\'{c} and Veljan. In this paper, we focus on the (asymptotic) $r$-log-convexity of $\{B_n\}_{n\geq1}$.

\begin{thm}\label{thm:asym-r-lgcvBn}
	The sequence $\{B_n\}_{n\geq 1}$ is asymptotically $r$-log-convex for any integer $r\geq 1$. In particular, for $r=2$, the sequence $\{B_n\}_{n\geq 1}$ is $2$-log-convex.
\end{thm}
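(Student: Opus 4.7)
The plan is to exploit the $P$-recursive (holonomic) nature of the Baxter numbers. Since $B_n$ is given in~\eqref{eq:B_n} by a hypergeometric sum, creative-telescoping algorithms of Zeilberger type produce a linear recurrence
\[
p_2(n)\,B_{n+2} \;+\; p_1(n)\,B_{n+1} \;+\; p_0(n)\,B_n \;=\; 0
\]
with polynomial coefficients $p_i(n)\in\mathbb{Q}[n]$. I would first derive this recurrence explicitly, then work with the consecutive ratio $r_n=B_{n+1}/B_n$. Dividing the recurrence by $B_{n+1}$ yields a first-order rational relation for $r_n$, from which the Birkhoff--Poincar\'e asymptotic method provides an expansion
\[
r_n \;\sim\; \mu\left(1+\frac{c_1}{n}+\frac{c_2}{n^2}+\frac{c_3}{n^3}+\cdots\right),
\]
where $\mu$ is the dominant characteristic root (here $\mu=8$) and the $c_i$ are rational numbers that can be computed recursively to any prescribed order.

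For the asymptotic $r$-log-convexity, the key observation is that each iterate $\mathscr{L}^j(\{B_n\})/B_n^{2^j}$ can be written as a fixed polynomial in the shifted ratios $r_n,r_{n+1},\dots,r_{n+2^j-2}$, and therefore admits an asymptotic series obtained by substituting the expansion of $r_{n+k}$ for each $k$. Following the framework of Hou and Zhang~\cite{Hou-Zhang2019}, I would compute enough terms of the expansion of $r_n$ (the number depending on $r$) to guarantee that the leading coefficient of the resulting asymptotic expansion of $\mathscr{L}^j(\{B_n\})/B_n^{2^j}$ is strictly positive for every $1\le j\le r$. The delicate point here is cancellation: each application of $\mathscr{L}$ typically annihilates the top few orders in $1/n$, so one must carry the expansion of $r_n$ to order $2r$ or higher and verify that the first surviving coefficient is positive. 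This bookkeeping is the principal technical obstacle.

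To upgrade the case $r=2$ from an asymptotic statement to $2$-log-convexity on the full range $n\ge 1$, a quantitative version of the above is needed. Using the Hou--Zhang algorithm, I would construct explicit rational functions $F(n)$ and $G(n)$ such that $F(n)\le r_n\le G(n)$ for all $n$ beyond some effectively computable threshold $N_0$, then substitute these two-sided bounds into the expression
\[
\mathscr{L}^{2}(\{B_n\}) \;=\; B_{n}B_{n+2}B_{n+3}^{2}-2B_{n}B_{n+2}B_{n+1}B_{n+3}+\cdots
\]
(after factoring out the manifestly positive quantity $B_n^4$ and writing the remainder as a polynomial in $r_n,r_{n+1},r_{n+2}$). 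A symbolic positivity check on the resulting rational function in $n$ then establishes $\mathscr{L}^2(\{B_n\})_n>0$ for all $n\ge N_0$. Finally, I would verify the remaining finitely many indices $1\le n< N_0$ by direct computation of $B_n$. The main tuning problem is to choose the order of the bounding functions $F,G$ large enough that $N_0$ is small and the symbolic verification is tractable, yet small enough that the algebraic manipulations remain manageable.
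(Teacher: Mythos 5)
Your proposal follows essentially the same route as the paper: it derives the second-order $P$-recurrence for $B_n$ by creative telescoping, extracts a Puiseux-type expansion of the ratio $B_{n+1}/B_n$ (with dominant root $8$) in the Birkhoff--Trjitzinsky/Hou--Zhang framework to get asymptotic $r$-log-convexity, and then upgrades $r=2$ to the full range via effective two-sided rational bounds on the ratio plus a finite check of initial terms --- which is precisely what the paper does by invoking Hou and Zhang's Theorem~2.2 and their {\tt rLogBound} command (threshold $13069$). The only slip is cosmetic: $\mathscr{L}^2(\{B_n\})_n=(B_nB_{n+2}-B_{n+1}^2)(B_{n+2}B_{n+4}-B_{n+3}^2)-(B_{n+1}B_{n+3}-B_{n+2}^2)^2$ involves $B_{n+4}$, so it is a polynomial in $r_n,\dots,r_{n+3}$ rather than $r_n,r_{n+1},r_{n+2}$ as you wrote, but this does not affect the validity of the approach.
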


The remainder of this paper is organized as follows.
In Section~\ref{S-Hoggatt}, we prove Theorem~\ref{thm:interlace-HP} by applying Garloff and Wagner's results on Hadamard products of interlacing polynomials and Dimitrov's work concerning derivatives of interlacing polynomials.
Section~\ref{S-rlgcvx} is devoted to the study of (asymptotic) $r$-log-convexity of Baxter numbers. We show a proof of Theorem~\ref{thm:asym-r-lgcvBn} with some tools in asymptotic expansions of $P$-recursive sequences given by Hou and Zhang. In Section~\ref{sec-conclusion}, we conclude this paper by proposing three conjectures related to $(q,t)$-Hoggatt sums and Baxter numbers.

\section{Interlacing property of Hoggatt polynomials}\label{S-Hoggatt}

The aim of this section is to complete the proof of Theorem~\ref{thm:interlace-HP}. To this end, we first recall two operations, Hadamard product and polynomial derivative, which preserve the interlacing property.

The Hadamard product of two polynomials $f(t)=\sum_{i=0}^{n}a_{i}t^{i}$ and $g(t)=\sum_{i=0}^{m}b_{i}t^{i}$ is defined to be
\begin{align}\label{defi:Hadamard-prd}
	f(t)*g(t)=\sum_{i=0}^{k}a_{i}b_{i}t^{i},
\end{align}
where $k=\min\{m,n\}$.
Mal\'{o}~\cite{Malo-1895} first proved that the Hadamard product preserves the real-rootedness of polynomials. Later, Garloff and Wagner~\cite{Garloff-Wagner-1996} extended Mal\'{o}'s result as follows.

\begin{thm}[{\cite[Theorem 4]{Garloff-Wagner-1996}}]\label{thm-GW}
	Let $f(t),\,g(t),\,p(t),\,q(t) \in \mathbb{R}[t]$ with positive leading coefficients and only nonpositive zeros.
	\begin{itemize}
		\item[(1)] If $f(t)*p(t)\neq 0$, then $f(t)*p(t)$ has only nonpositive zeros which are simple except possibly at the origin.
		\item[(2)] If $g(t)\preceq f(t)$ and $q(t)\preceq p(t)$, then we have $$g(t)*q(t)\preceq f(t)*p(t).$$
		\item[(3)] If $g(t)\preceq f(t)$ and $q(t)\preceq p(t)$, $f(t),\,g(t),\,p(t)$ and $q(t)$ are all nonzero, and neither $f(t)=c_1g(t)$ and $p(t)=c_2q(t)$ nor $f(t)=c_1tg(t)$ and $p(t)=c_2tq(t)$ for any $c_1,\,c_2>0$, then we have
		\begin{align}\label{eq-gcd-r}
			\gcd(g(t)*q(t),f(t)*p(t))=t^r,
		\end{align}
		where $r$ is the maximum multiplicity of $t=0$ as a zero of the polynomials $g(t)$ and $q(t)$.
	\end{itemize}
\end{thm}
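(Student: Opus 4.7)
My strategy is built on the classical theory linking polynomials with only nonpositive real zeros to P\'olya frequency (PF) sequences via the Aissen-Schoenberg-Whitney-Edrei correspondence: a polynomial $f(t)=\sum_{k\ge 0}a_kt^k$ with positive leading coefficient has only nonpositive real zeros if and only if its coefficient sequence $(a_k)$ is PF$_\infty$ (equivalently, the infinite Toeplitz matrix $(a_{i-j})$ is totally positive). The key ingredient is that the Hadamard (coordinate-wise) product of two PF$_\infty$ sequences is again PF$_\infty$---in polynomial language this is Malo's theorem, together with Schur's refinement restricting the sign of the zeros.

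For Part (1), factoring $f(t)=c\prod_{i}(1+\alpha_it)$ and $p(t)=c'\prod_{j}(1+\beta_jt)$ with $\alpha_i,\beta_j\ge 0$ puts $(a_k)$ and $(b_k)$ in PF$_\infty$, so their coordinate-wise product $(a_kb_k)$ is PF$_\infty$, meaning $f*p$ has only nonpositive real zeros. The simplicity of nonzero zeros follows by observing that the underlying minor inequalities are strict away from the boundary, so any higher-order zero away from the origin would contradict this strictness.

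For Part (2), I would use the Hermite-Kakeya-Obreschkoff criterion in the nonpositive setting: $g\preceq f$ iff $\alpha f+\beta g$ has only nonpositive real zeros for all $\alpha,\beta\ge 0$, not both zero. Given this, the polynomial $\alpha f+\beta g$ together with $p$ meet the hypotheses of Part (1), so
\begin{align*}
(\alpha f+\beta g)*p=\alpha(f*p)+\beta(g*p)
\end{align*}
has only nonpositive zeros for all admissible $\alpha,\beta$, which by the criterion forces $g*p\preceq f*p$ (the direction pinned down by degree comparison). A symmetric argument gives $g*q\preceq g*p$, and chaining yields $g*q\preceq f*p$; boundary equalities are handled by a continuity-of-zeros argument under small perturbations of the leading coefficients or roots.

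For Part (3), I would argue by contradiction: a nonzero common zero $t_0<0$ of $f*p$ and $g*q$ would force a rigid multiplicative relation among the underlying PF sequences, which combined with $g\preceq f$ and $q\preceq p$ collapses to either $f=c_1g$ (or $f=c_1tg$) and similarly for $p$ and $q$---the excluded cases. \textbf{The main obstacle} will be making this rigidity precise: it requires a sharpened Hadamard preservation theorem that controls equality cases in the underlying minor inequalities. A plausible route is induction on $\deg f+\deg p$, first peeling off common $t$-factors (which accounts for the exponent $r$ in the gcd), and then at each step ruling out accidental cancellations by exploiting the strict interlacing afforded by the noncoincidence hypotheses on $(f,g)$ and $(p,q)$.
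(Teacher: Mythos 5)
The paper does not prove this statement at all: Theorem~\ref{thm-GW} is imported verbatim from Garloff and Wagner \cite[Theorem 4]{Garloff-Wagner-1996}, so there is no in-paper proof to compare against. Judged on its own merits, your proposal has genuine gaps.

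The most serious one is the criterion you rely on in Part (2). It is \emph{not} true that $g(t)\preceq f(t)$ if and only if $\alpha f+\beta g$ has only nonpositive real zeros for all $\alpha,\beta\ge 0$. Restricting to nonnegative combinations only characterizes the existence of a \emph{common interleaver} (Chudnovsky--Seymour/Fell), which is strictly weaker than mutual interlacing. Concretely, take $f(t)=(t+1)(t+4)$ and $g(t)=(t+2)(t+3)$: then $\alpha f+\beta g=(\alpha+\beta)t^2+5(\alpha+\beta)t+(4\alpha+6\beta)$ has discriminant $(\alpha+\beta)(9\alpha+\beta)\ge 0$, so every nonnegative combination has only real (indeed negative) zeros, yet neither $g\preceq f$ nor $f\preceq g$ holds. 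The genuine Hermite--Kakeya--Obreschkoff theorem requires all real $\alpha,\beta$, but then $\alpha f+\beta g$ need not have nonpositive zeros or a positive leading coefficient, so your Part (1) no longer applies to it; and even with full HKO you only get ``$g*q\preceq f*p$ or $f*p\preceq g*q$,'' with the direction in the equal-degree (``alternates left of'') case not determined by degrees alone. So the deduction of Part (2) from Part (1) as written does not go through.

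Two further points. In Part (1), the realness and nonpositivity of the zeros of $f*p$ do follow from the Malo--Schur circle of ideas you invoke, but the \emph{simplicity} of the nonzero zeros is a genuine refinement (it is exactly what Garloff--Wagner add to Mal\'o), and ``the underlying minor inequalities are strict away from the boundary'' is an assertion, not an argument; note also that the Hadamard product of totally positive matrices is not in general totally positive, so the TP framing has to be handled via the Toeplitz/PF composition theorems rather than entrywise products of minors. In Part (3) you explicitly flag the rigidity step as ``the main obstacle'' and leave it unresolved; since the gcd statement is precisely what the paper uses (in Corollary~\ref{coro-GS-S}) to upgrade $\preceq$ to $\prec$, this is not a cosmetic omission but the part of the theorem that carries the load downstream. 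As it stands, the proposal is a plausible roadmap for Parts (1)--(2) modulo the HKO error, but it does not constitute a proof of the theorem.
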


To better demonstrate Theorem~\ref{thm:interlace-HP}, we simplify the above theorem as the following corollary.

\begin{coro}\label{coro-GS-S}
	Let $f(t),\,g(t),\,p(t),\,q(t) \in \mathbb{R}[t]$ with positive leading coefficients and only negative zeros.
	If $g(t)$ interlaces $f(t)$ and $q(t)$ interlaces $p(t)$, then we have $$g(t)*q(t)\prec f(t)*p(t).$$
\end{coro}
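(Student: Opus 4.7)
The plan is to derive this strict interlacing as a short consequence of all three parts of Theorem~\ref{thm-GW}. First, note that $g$ interlacing $f$ implies $g \preceq f$, and likewise $q \preceq p$, so part (2) gives the weak relation $g(t)*q(t) \preceq f(t)*p(t)$ immediately. Moreover, since interlacing forces $\deg g = \deg f - 1$ and $\deg q = \deg p - 1$, a quick degree count yields $\deg(g*q) = \min(\deg g,\deg q) = \min(\deg f,\deg p) - 1 = \deg(f*p) - 1$, so the weak $\preceq$ must actually be the interlacing branch of the definition rather than the alternating branch.

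To upgrade weak to strict interlacing I would apply parts (1) and (3) of Theorem~\ref{thm-GW}. The hypothesis that $f,g,p,q$ have only negative zeros means $0$ is not a zero of any of them, so in particular the degenerate cases $f = c_1 t g$ and $p = c_2 t q$ are excluded; and the differing degrees $\deg f \neq \deg g$, $\deg p \neq \deg q$ exclude $f = c_1 g$ and $p = c_2 q$. Hence part (3) applies and gives
\[
\gcd\bigl(g(t)*q(t),\,f(t)*p(t)\bigr) \;=\; t^r,
\]
where $r$ is the maximum multiplicity of $0$ as a zero of $g$ or $q$; but $g(0),q(0)>0$, so $r = 0$ and the two Hadamard products are coprime. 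Part (1) then guarantees that $f*p$ and $g*q$ each have only nonpositive simple zeros (simple except possibly at the origin), and since $(f*p)(0) = f(0)p(0) > 0$ and $(g*q)(0)=g(0)q(0)>0$, the zeros of both polynomials are in fact strictly negative and simple.

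Putting these pieces together, the zeros of $g*q$ and $f*p$ are simple, strictly negative, and disjoint, while the weak chain $g*q \preceq f*p$ from part (2) already sandwiches them in the interlacing pattern~\eqref{eq:def-interlac-2}. All inequalities in that chain must therefore be strict, which is exactly $g(t)*q(t) \prec f(t)*p(t)$.

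I do not expect a real obstacle here: the corollary is essentially a repackaging of Theorem~\ref{thm-GW} under the stronger hypothesis that all zeros are negative (not merely nonpositive) and that the interlacings are strict-degree (not equal-degree). The only bookkeeping worth being careful about is verifying the non-degeneracy conditions of part (3), which follow cleanly from the degree gap and the absence of $0$ as a zero.
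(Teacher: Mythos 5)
Your proposal is correct and follows essentially the same route as the paper: part (2) of Theorem~\ref{thm-GW} gives the weak relation, the degree gap and non-vanishing at the origin verify the non-degeneracy hypotheses of part (3), and the resulting coprimality $\gcd(g*q, f*p)=t^0=1$ forces every inequality in the interlacing chain to be strict. The extra appeal to part (1) for simplicity of the zeros is harmless but not needed, since the chain in~\eqref{eq:def-interlac-2} only compares zeros of $g*q$ against zeros of $f*p$.
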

\begin{proof}
	According to~(2)~of~Theorem~\ref{thm-GW}, one can first deduce that
	\begin{align}\label{eq-fgpq}
		g(t)*q(t)\preceq f(t)*p(t).
	\end{align}
	Furthermore, under the given conditions, we have $\deg f(t)=\deg g(t) +1$, $\deg p(t)=\deg q(t)+1$, and $f(t),\,g(t),\,p(t),\,q(t)$ do not vanish at the origin. These imply that $f(t)\neq c_1g(t)$, $p(t)\neq c_2q(t)$, $f(t)\neq c_3 t g(t)$ and $p(t)\neq c_4 t q(t)$ for any $c_1,\,c_2,\,c_3,\,c_4>0$. Utilizing~(3)~of~Theorem~\ref{thm-GW}, we can obtain that $r=0$ in Equation~\eqref{eq-gcd-r} and thus
	$$\gcd(g(t)*q(t),f(t)*p(t))=t^r=1.$$
	As a result, the relation~\eqref{eq-fgpq} holds strictly.
\end{proof}

Another crucial tool is a celebrated result on polynomial derivative given by Markov~\cite{Markov-1892}, which was restated by Dimitrov~\cite[Page 70]{Dimitrov-2012}.
\begin{thm}[{\cite[\S 16]{Markov-1892}}]\label{thm:Markov}
	Let $f(t)$ and $g(t)$ be real algebraic polynomials with only real and distinct zeros.
	If $g(t)\prec f(t)$, then $g'(t) \prec f'(t)$.
\end{thm}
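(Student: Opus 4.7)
My plan is to prove Theorem~\ref{thm:Markov} by combining two classical tools: the Hermite--Biehler theorem and the Gauss--Lucas theorem. The Hermite--Biehler theorem translates the strict interlacing or strict alternating relation between two real polynomials into a zero-location condition on a single complex polynomial: specifically, for real polynomials with positive leading coefficients, $g(t) \prec f(t)$ is equivalent to the complex polynomial $F(z) = f(z) + i\,g(z)$ having all its zeros in one open half-plane (the choice of upper versus lower is dictated by the sign conventions).

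Given this translation, the core of the argument is an immediate application of Gauss--Lucas: since all zeros of $F(z)$ lie in an open half-plane, a convex set, the zeros of $F'(z) = f'(z) + i\,g'(z)$ must also lie in that same open half-plane. Applying the Hermite--Biehler theorem in reverse to $F'(z)$ then yields $g'(t) \prec f'(t)$. The degree bookkeeping works uniformly in both cases: when $g$ strictly interlaces $f$ (so $\deg f = \deg g + 1 = n$), differentiation preserves this relation with $\deg f' = \deg g' + 1 = n - 1$; when $g$ strictly alternates left of $f$ (so $\deg f = \deg g = n$), differentiation gives $\deg f' = \deg g' = n - 1$, again preserving the form.

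The main subtlety will be to verify the precise formulation of the Hermite--Biehler theorem in both directions and to ensure that the \emph{strictness} hypothesis — i.e., that all zeros of $F$ are in the open half-plane and none on the real axis — is genuinely preserved under taking derivatives. An entirely elementary alternative, avoiding complex analysis, uses the well-known characterization that $g \prec f$ strictly if and only if every nonzero real linear combination $\alpha f(t) + \beta g(t)$ has only simple real zeros (with the expected degree). Rolle's theorem applied to each pencil member then shows that $\alpha f'(t) + \beta g'(t) = (\alpha f + \beta g)'(t)$ also has only simple real zeros, from which $g' \prec f'$ follows by reversing the characterization. In this alternative route, the main obstacle is handling the degenerate pencil members where the leading coefficient of $\alpha f + \beta g$ vanishes, so that the degree drops and one must argue the interlacing of $g'$ and $f'$ at the degenerate parameters by a separate limiting argument.
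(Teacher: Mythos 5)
The paper does not prove Theorem~\ref{thm:Markov} at all: it is quoted verbatim as a classical result of Markov, with citations to \cite{Markov-1892} and \cite{Dimitrov-2012}, so there is no in-paper argument to compare yours against. Judged on its own merits, your primary route (Hermite--Biehler plus Gauss--Lucas) is the standard modern proof of this statement and is sound: after normalizing both leading coefficients to be positive (a harmless step, since negating $f$ or $g$ changes neither their zero sets nor those of their derivatives, and the relation $\prec$ is defined purely in terms of zeros), $g\prec f$ is equivalent to all zeros of $F=f+ig$ lying in one fixed open half-plane; the convex hull of finitely many points of an open half-plane is a compact subset of that half-plane, so Gauss--Lucas places the zeros of $F'=f'+ig'$ there as well, and the converse direction of Hermite--Biehler returns $g'\prec f'$ with the correct orientation and the correct degree pattern in both the interlacing and the alternating case. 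Two details to nail down in a full write-up are the orientation convention in Hermite--Biehler (which you already flag) and the degenerate case $\deg g=0$, where $g'=0$ and the conclusion holds only via the paper's conventions for comparisons with $0$ and constants. Your elementary alternative via the pencil $\alpha f+\beta g$ and Rolle has one gap beyond the degree-drop issue you identify: the Hermite--Kakeya--Obreschkoff pencil characterization is symmetric in the two polynomials, so after applying Rolle it only yields that $f'$ and $g'$ interlace in \emph{one of the two possible orders}; Rolle by itself localizes the $j$-th largest zeros of $f'$ and $g'$ in overlapping intervals, so a separate argument (e.g., a Wronskian sign condition, or falling back on Hermite--Biehler) is still needed to conclude $g'\prec f'$ rather than $f'\prec g'$.
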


Now we are ready to prove Theorem~\ref{thm:interlace-HP}, which includes Theorem~\ref{thm:interlace-PBn} as a special case.
\begin{proof}[Proof of Theorem~\ref{thm:interlace-HP}]	
For convenience, we set
\begin{equation}\label{defi:Fnell}
	F_{n,\,m}(t):=\sum_{k=0}^{n-m+1}\binom{n}{k}\binom{n}{k+1}\cdots \binom{n}{k+m-1}t^k.
\end{equation}
This simplifies the expression of $H_n^{[m]}(1,t)$ since
\begin{align*}
	H_{n}^{[m]}(1,t)
	=&\,\frac{1}{\binom{n+m-2}{1}\binom{n+m-2}{2}\cdots\binom{n+m-2}{m-1}}
	\sum_{k=0}^{n-1}\binom{n+m-2}{k}\binom{n+m-2}{k+1}\cdots\binom{n+m-2}{k+m-1} t^k\\
	=&\,\frac{1}{\binom{n+m-2}{1}\binom{n+m-2}{2}\cdots\binom{n+m-2}{m-1}}F_{n+m-2,\,m}(t).
\end{align*}
	Then the assertion for $m=1$ is trivial since $H_n^{[1]}(1,t)=F_{n-1,\,1}(t)=(t+1)^{n-1}$ for $n\geq 1$.
	Now it suffices to prove that for any fixed $m\ge 2$, the sequence $\{F_{n,\,m}(t)\}_{n\geq m-1}$ is a Sturm sequence by induction on $m\ge 2$.
	
	For $m=2$, we have $(n+1)F_{n,\,2}(t)=(F_{n,\,1}(t)* F_{n+1,\,1}(t))'$
	by the definitions~\eqref{defi:Hadamard-prd} and~\eqref{defi:Fnell}.
	It is evident that $F_{n-1,\,1}(t)$ interlaces $F_{n,\,1}(t)$ for any $n\geq2$.
	Thus the sequence $\{F_{n,\,2}(t)\}_{n\geq 1}$ is a Sturm sequence by Corollary~\ref{coro-GS-S} and Theorem~\ref{thm:Markov}.
	
	For $m\ge 2$, suppose that $\{F_{n,\,m-1}(t)\}_{n\geq m-2}$ is a Sturm sequence.
	We proceed to prove that for any $n\ge m$,
	\begin{align}\label{intlc-Fn-1Fn}
		F_{n-1,\,m}(t)\prec F_{n,\,m}(t).
	\end{align}
	To achieve this, we construct an auxiliary polynomial $G_{n,\,m}(t)$ as follows:
	\begin{align*}
		G_{n,\,m}(t)&=F_{n,\,1}(t)* F_{n-1,\,m-1}(t)\\
		&=\sum_{k=0}^{n-m+1}\binom{n}{k}\cdot\left(\binom{n-1}{k}\binom{n-1}{k+1}\cdots \binom{n-1}{k+m-2}\right)\cdot t^k.
	\end{align*}
	Since the polynomials $F_{n,\,1}(t)$ and $F_{n-1,\,m-1}(t)$ have nonzero constant and their coefficients are all positive, it is clear that they have only negative zeros.
	Thus the polynomial $G_{n,\,m}(t)$ has only nonpositive zeros by~(1)~of Theorem~\ref{thm-GW}. 
	Besides, we have $\deg F_{n,\,m}(t)= n-m+1$ for any $n\ge m-1$.
	Combining the fact that $F_{n,\,1}(t) \preceq F_{n+1,\,1}(t)$ and the inductive hypothesis
	\begin{align*}
		F_{n-1,\,m-1}(t)\prec F_{n,\,m-1}(t),
	\end{align*}
	we apply Corollary~\ref{coro-GS-S} to conclude that
	\begin{align*}
		G_{n,\,m}(t)\prec G_{n+1,\,m}(t).
	\end{align*}
	This implies that
	\begin{align}\label{intlc-Gn'x}
		G_{n,\,m}'(t)\prec G_{n+1,\,m}'(t)
	\end{align}
	by Theorem~\ref{thm:Markov}.
	We see that
	\begin{align}
		G_{n,\,m}'(t)&=\sum_{k=0}^{n-m+1}k\binom{n}{k}\binom{n-1}{k}\binom{n-1}{k+1}\cdots \binom{n-1}{k+m-2}t^{k-1}\nonumber\\[5pt]
		&=\sum_{k=0}^{n-m}(k+1)\binom{n}{k+1}\binom{n-1}{k+1}\binom{n-1}{k+2}\cdots \binom{n-1}{k+m-1 }t^{k}\nonumber\\[5pt]
		&=n\sum_{k=0}^{n-m}\binom{n-1}{k}\binom{n-1}{k+1}\binom{n-1}{k+2}\cdots \binom{n-1}{k+m-1}t^{k}\nonumber\\[5pt]
		&=n F_{n-1,\,m}(t).\label{eq:Gn'-Fn1x}
	\end{align}
	From the relations~\eqref{intlc-Gn'x} and~\eqref{eq:Gn'-Fn1x}, we obtain~\eqref{intlc-Fn-1Fn}. It follows that the polynomial sequence $\{F_{n,\,m}(t)\}_{n\ge m-1}$ is a Sturm sequence and so is $\{H_n^{[m]}(1,t)\}_{n\ge 1}$ for $m\ge 2$.
\end{proof}

\section{Asymptotic $r$-log-convexity of the Baxter numbers}\label{S-rlgcvx}

In this section, we complete the proof of Theorem~\ref{thm:asym-r-lgcvBn} by Hou and Zhang’s method on asymptotic analysis. Let us begin with a brief overview of their technique~\cite{Hou-Zhang2019}.

The asymptotic expansions of $P$-recursive sequences were given by Birkhoff and Trjitzinsky~\cite{Birkhoff-Trjitzinsky1933} and later developed by Wimp and Zeilberger~\cite{Wimp-Zeilberger1985}.
Based on their results, Hou and Zhang~\cite{Hou-Zhang2019} further established a symbolic method to determine whether a $P$-recursive sequence is (asymptotically) $r$-log-convex.

Let $\{a_n\}_{n\geq 0}$ be a sequence of real numbers. Suppose that there exist real coefficients $\ell_0,\,\ell_1,\ldots,\ell_m$ and real exponents $\alpha_0<\alpha_1<\cdots<\alpha_m $
such that
\begin{align*}
	\lim_{n\rightarrow \infty} n^{\alpha_m}\left(a_n-\sum_{i=0}^m \frac{\ell_i}{n^{\alpha_i}}\right)=0.
\end{align*}
Then a {Puiseux-type approximation} of $a_n$, denoted by $a_n \approx b_n$, is defined by the summation
\begin{align*}
	b_n=\sum_{i=0}^m \frac{\ell_i}{n^{\alpha_i}}.
\end{align*}
Thus, the asymptotic expansion of $a_n$ can be written in the standard little-o notation as
\begin{align*}
	a_n=\frac{\ell_0}{n^{\alpha_0}}+\frac{\ell_1}{n^{\alpha_1}}+\cdots
	+o\left(\frac{1}{n^{\alpha_m}}\right).
\end{align*}
Define an operator $\mathscr{R}$ on $\{a_n\}_{n\ge 0}$ as $\mathscr{R}(a_n)=a_{n+1}/a_n$.
Clearly, $\mathscr{R}^2 (a_n)=a_{n}a_{n+2}/a_{n+1}^2$.
Hou and Zhang~\cite{Hou-Zhang2019} derived the following criterion for asymptotic $r$-log-convexity.

\begin{thm}[{\cite[Theorem 2.2]{Hou-Zhang2019}}]\label{thm:Hou-Zhang-1}
	Let $\{a_n\}_{n\geq 0}$ be a positive sequence such that the ratio $\mathscr{R}^2 (a_n)$ has a Puiseux-type approximation of the form
	\begin{align}\label{eq:P-ty-1}
		\mathscr{R}^2 (a_n) = 1 + \frac{c}{n^{\alpha}} +\cdots + o \left(\frac{1}{n^\beta}\right),
	\end{align}
	where $0< \alpha \leq \beta$.
	If $c>0$ and $\alpha\geq 2$, then $\{a_n\}_{n\geq 0}$ is asymptotically $\left(\lfloor \frac{\beta-\alpha}{2}\rfloor+1\right)$-log-convex.	
\end{thm}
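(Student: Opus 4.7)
The strategy is induction on the level $r = \lfloor(\beta-\alpha)/2\rfloor + 1$. The base case $r = 1$ (i.e., $\beta < \alpha + 2$) is immediate: since $c > 0$, the hypothesis gives $\mathscr{R}^2(a_n) - 1 = c/n^\alpha + o(1/n^\alpha) > 0$ for all sufficiently large $n$, and combining this with $a_n > 0$ yields
\[
\mathscr{L}(a_n) = a_{n+1}^2\bigl(\mathscr{R}^2(a_n) - 1\bigr) > 0
\]
eventually, which is asymptotic $1$-log-convexity.

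For the inductive step I would set $b_n = \mathscr{L}(a_n)$ and prove a transfer lemma: the sequence $\{b_n\}$ again satisfies the hypothesis of the theorem, with new parameters $(\alpha', \beta', c')$ obeying $c' > 0$, $\alpha' \geq 2$, and $\beta' - \alpha' = \beta - \alpha - 2$. Granting the lemma, the inductive hypothesis applied to $\{b_n\}$ yields asymptotic $\lfloor(\beta-\alpha)/2\rfloor$-log-convexity of $\{b_n\}$, which translates via $\mathscr{L}^{j+1}(a_n) = \mathscr{L}^j(b_n)$ into $\mathscr{L}^k(a_n) \geq 0$ for $2 \leq k \leq \lfloor(\beta-\alpha)/2\rfloor + 1$ and large $n$. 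Combined with $\mathscr{L}(a_n) > 0$ eventually from the base case applied to $\{a_n\}$ itself, this gives the desired asymptotic $(\lfloor(\beta-\alpha)/2\rfloor + 1)$-log-convexity.

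The heart of the argument is proving the transfer lemma. Substituting $b_n = a_{n+1}^2 r_n$ with $r_n := \mathscr{R}^2(a_n) - 1$ into the definition of $\mathscr{R}^2$ yields the key algebraic identity
\[
\mathscr{R}^2(b_n) = \bigl(\mathscr{R}^2(a_{n+1})\bigr)^2 \cdot \mathscr{R}^2(r_n).
\]
A Taylor expansion at $n$ gives $(\mathscr{R}^2(a_{n+1}))^2 = 1 + 2c/n^\alpha + \cdots + o(1/n^\beta)$. For the other factor, writing $r_n = cn^{-\alpha}(1 + \eta_n)$, the homogeneous piece contributes
\[
\left(\frac{(n+1)^2}{n(n+2)}\right)^{\!\alpha} = \left(1 + \frac{1}{n^2+2n}\right)^{\!\alpha} = 1 + \frac{\alpha}{n^2} + O\!\left(\frac{1}{n^3}\right),
\]
while the remaining factor $(1+\eta_n)(1+\eta_{n+2})/(1+\eta_{n+1})^2$ is a discrete second difference contributing only lower-order corrections. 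Multiplying the two expansions yields $\mathscr{R}^2(b_n) = 1 + c'/n^2 + \cdots + o(1/n^{\beta'})$ with $\alpha' = 2$ and leading coefficient $c' = \alpha > 0$ when $\alpha > 2$ and $c' = 2c + 2 > 0$ when $\alpha = 2$, so the hypotheses of the theorem are preserved for $\{b_n\}$.

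The main technical obstacle is the careful bookkeeping of the Puiseux expansion of $\mathscr{R}^2(r_n)$: one must quantify $\eta_{n+k} - \eta_n$ term-by-term from the Puiseux expansion of $\mathscr{R}^2(a_n)$ and verify that the second-difference combination $(1+\eta_n)(1+\eta_{n+2})/(1+\eta_{n+1})^2$ contributes corrections exactly two orders smaller than $\eta_n$, which is what produces the $\beta' - \alpha' = \beta - \alpha - 2$ loss and determines when the induction terminates. The positivity of the new leading coefficient $c'$ in the borderline case $\alpha = 2$, where both summands $2c/n^\alpha$ and $\alpha/n^2$ collide at the same order, is the other delicate step to verify.
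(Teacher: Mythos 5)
A point of reference first: the paper does not prove this statement --- it is quoted verbatim from Hou and Zhang \cite{Hou-Zhang2019} and used as a black box in Section~3 --- so there is no in-paper proof to compare against. Your proposal is, in outline, a faithful reconstruction of the original Hou--Zhang argument: the induction on $r$, the identity $\mathscr{R}^2(\mathscr{L}(a_n)) = \bigl(\mathscr{R}^2(a_{n+1})\bigr)^2\cdot \mathscr{R}^2\bigl(\mathscr{R}^2(a_n)-1\bigr)$, the base case via $\mathscr{L}(a_n)=a_{n+1}^2(\mathscr{R}^2(a_n)-1)$, and the parameter bookkeeping $\alpha'=2$, $\beta'=\beta-\alpha$, $c'=\alpha$ (or $c'=2c+2$ in the borderline case $\alpha=2$) are exactly the right ingredients, and each is verifiable.

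One caution on the step you yourself flag as the technical heart. The second difference $\eta_n-2\eta_{n+1}+\eta_{n+2}$ gains two orders only on the explicit Puiseux terms $d_i n^{-\gamma_i}$ of $\eta_n$; it gains nothing on the trailing $o\bigl(n^{-(\beta-\alpha)}\bigr)$ remainder, since the values of that remainder at $n$, $n+1$, $n+2$ need not cancel. So the assertion that the factor $(1+\eta_n)(1+\eta_{n+2})/(1+\eta_{n+1})^2$ contributes corrections ``exactly two orders smaller than $\eta_n$'' is false as stated for the remainder term. Fortunately, your final accounting $\beta'=\beta-\alpha$ (equivalently $\beta'-\alpha'=\beta-\alpha-2$ with $\alpha'=2$) is precisely the one consistent with \emph{no} gain on the remainder, so the induction terminates at the right level and the conclusion $\lfloor(\beta-\alpha)/2\rfloor+1$ comes out correctly. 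In the write-up you should therefore separate the explicit terms, whose second differences may be re-expanded to any order, from the $o$-term, which can only be carried along unchanged; with that distinction made, the transfer lemma and hence the whole induction go through.
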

This reduces the proof of the asymptotic result in Theorem~\ref{thm:asym-r-lgcvBn}
to finding a Puiseux-type approximation of $\mathscr{R}^2 (B_n)$ that satisfies~\eqref{eq:P-ty-1} with $c>0$ and $\alpha\geq2$ for any given integer $\beta\geq 2$.
In fact, there exists a systematic theory for handling problems of this type associated with $P$-recursive sequences. 
Recall that a sequence $\{a_n\}_{n\ge 0}$ is said to be {$P$-recursive} of order $j$ if it satisfies a recurrence relation of the form
\begin{align}\label{rec:P-r}
	a_{n+j}=R_0(n) a_n + R_1(n) a_{n+1} + \cdots + R_{j-1}(n) a_{n+j-1}
\end{align}
where each $R_i(n)$ is a rational function of $n$, see Stanley~\cite[Section 6.4]{Stanley2001}.
Suppose that the sequence $\{a_n\}_{n\geq 0}$ is $P$-recursive, then $a_n$ can be asymptotically expressed as a finite linear combination of terms of the form
\begin{align*}
	e^{Q(\rho,n)}\, s(\rho,n),
\end{align*}
where
\begin{align*}
	Q(\rho,n) & = \mu_0 n \log n + \sum_{j=1}^{\rho} \mu_j n^{j/\rho}, \\
	s(\rho,n) & = n^{\nu} \sum_{j=0}^{\delta-1}(\log n)^j \sum_{k=0}^{K} \lambda_{k,\,j} n^{-k/\rho},
\end{align*}
with $\rho,\, \delta,\,K$ being positive integers and $\mu_j,\, \nu,\, \lambda_{k,\,j}$ being complex numbers.
See Birkhoff and Trjitzinsky~\cite{Birkhoff-Trjitzinsky1933}, Wimp and Zeilberger~\cite{Wimp-Zeilberger1985} for more details.

Furthermore, Hou and Zhang~\cite{Hou-Zhang2019} also gave a symbolic method for finding an explicit $N_r$ such that $\{a_n\}_{n\geq N_r}$ is $r$-log-convex, when it is asymptotically log-convex.
They implemented their algorithms in a {\tt Mathematica} package {\tt P-rec.m}, and the related command is
\begin{align}\label{com:rLB}
	{\tt rLogBound[L,\ n,\ N,\ ini\_val,\ r,\ \eta]}
\end{align}
where
$$
L=\left(R_0(n) + R_1(n) {\tt N} + \cdots + R_{j-1}(n) {\tt N}^{j-1}\right)-{\tt N}^j
$$
corresponds to the recurrence~\eqref{rec:P-r}, ${\tt N}$ is the shift operator such that ${\tt N}(a_n)=a_{n+1}$, {\tt ini\_val} is a list of initial values of $\{a_n\}_{n\geq 0}$, and $\eta$ is the number of terms in a Puiseux-type approximation expansion of $\mathscr{R}(a_n)$. As mentioned by Hou and Zhang~\cite{Hou-Zhang2019}, when $\eta$ is large enough, we shall get a symbolic proof of the $r$-log-convexity of a given sequence.

We are now in a position to prove Theorem~\ref{thm:asym-r-lgcvBn}, following the approaches applied in~\cite[Corollary 3.2 and Theorem 4.2]{Hou-Zhang2019}.

\begin{proof}[Proof of Theorem~\ref{thm:asym-r-lgcvBn}]
	
	To begin with, we observe that the sequence $\{B_n\}_{n\geq1}$ is $P$-recursive, satisfying the recurrence relation
	\begin{align}\label{eq:rec-Bn}
		(n+4)(n+5)B_{n+2}=(7n^2+35n+40)B_{n+1}+8n(n+1)B_{n},
	\end{align}
	as recorded by Richard L. Ollerton in the OEIS~\cite[A001181]{oeis}. Since $B_n$ can be written as a sum of hypergeometric terms $D_{n,\,k}$ over $k$, it is easy to verify~\eqref{eq:rec-Bn} using Zeilberger's algorithm~\cite{Zeilberger1991}.
		
	Then we can apply Theorem~\ref{thm:Hou-Zhang-1} to prove the asymptotic $r$-log-convexity of $\{B_n\}_{n\geq1}$.
	Clearly, $B_n>0$ for all $n\geq 1$.
	We find a Puiseux-type approximation of $B_n$ from the recurrence relation~\eqref{eq:rec-Bn} by using {\tt P-rec.m}\footnote{See \href{http://faculty.tju.edu.cn/146004/zh\_CN/lwcg/29931/content/37248.htm\#lwcgl}
		{https://faculty.tju.edu.cn/146004/zh\_CN/lwcg/29931/content/37248.htm\#lwcgl}.}
	given by Hou~\cite{Hou-Zhang2019}. For more alternative packages, see {\tt AsyRec}\footnote{See \href{https://sites.math.rutgers.edu/~zeilberg/mamarim/mamarimhtml/asy.html}
		{https://sites.math.rutgers.edu/\url{~}zeilberg/mamarim/mamarimhtml/asy.html}.}
	given by Zeilberger~\cite{Zeilberger2016}, 
	and {\tt Asymptotics.m}\footnote{See
		\href{http://www.kauers.de/software.html}{http://www.kauers.de/software.html}.} given by Kauers~\cite{Kauers2011}.
	First let
	\begin{align*}
		{\tt L=(n+4)(n+5)N^2-(7n^2+35n+40)N-8n(n + 1)}.
	\end{align*}
	By running the command
	\begin{align}\label{eq:command}
		{\tt Asy[L,\  n,\  N,\  K]}
	\end{align}
	for $K=2$,
	we find out an asymptotic expansion of $B_n$:
	\begin{align}\label{eq-Pt-Bn}
		B_n = C \cdot 8^n n^{-4} \left(1 - \frac{22}{3n}+ \frac{955}{27 n^2} +o\left(\frac{1}{n^2}\right)\right),
	\end{align}
	where $C$ is a constant. In fact, $C={2^5}/{(\sqrt{3}\pi)}$, see~\cite[Eq. 20]{CGHK1978}.
	Therefore, $\mathscr{R}^2 (B_n)$ has the form
	\begin{align}\label{eq:2.7}
		\mathscr{R}^2 (B_n)=\frac{B_{n}B_{n+2}}{B_{n+1}^2}
		= 1+\frac{4}{n^2} + o\left(\frac{1}{n^2}\right).
	\end{align}
	Comparing~\eqref{eq:2.7} with~\eqref{eq:P-ty-1}, it is clear that $c=4>0$, $\alpha=2>0$, $\beta=2\geq \alpha$, and hence $\mathscr{R}^2 (B_n)$ has a Puiseux-type approximation of the form~\eqref{eq:P-ty-1}. This implies that $\{B_n\}_{n\geq1}$ is asymptotically $1$-log-convex by Theorem~\ref{thm:Hou-Zhang-1}. For any $r\geq2$, taking $K$ to be $2r$ in the command~\eqref{eq:command}, one can obtain a Puiseux-type approximation of $B_n$ in the form of~\eqref{eq:P-ty-1} with the same $\alpha$ and $c$ as those in~\eqref{eq-Pt-Bn}. This leads to an asymptotic expansion of $\mathscr{R}^2 (B_n)$ as
	\begin{align*}
		\mathscr{R}^2 (B_n)=\frac{B_{n}B_{n+2}}{B_{n+1}^2}
		= 1+\frac{4}{n^2} +\cdots+ o\left(\frac{1}{n^{2r}}\right).
	\end{align*}
	As a result, we finally obtain the asymptotic $r$-log-convexity of $\{B_n\}_{n\geq1}$ by Theorem~\ref{thm:Hou-Zhang-1} for any positive integer $r$.
	
	For the $2$-log-convexity of $\{B_n\}_{n\ge 1}$, the symbolic proof is almost one-line by running the command given in~\eqref{com:rLB}.
	The command
	\begin{align*}
		{\tt rLogBound[L,\ n,\ N,\ \{1,\ 2,\ 6,\ 22,\ 92\},\ 2,\ 5]}
	\end{align*}
	outputs {\tt \{True, 13069\}}, which means that $\{B_n\}_{n\geq 13069}$ is $2$-log-convex.
	Since the log-convexity of $\{B_n\}_{n\geq 1}$ has been proved by Do\v{s}li\'{c} and Veljan~\cite[Theorem 4.18]{Doslic-Veljan2008}, it remains to check the first $13,069$ values of $\mathscr{R}^2 \mathscr{L}(B_n)$. We have verified that these initial values are all greater than one with {\tt Mathemtaica}. It follows that $\{B_n\}_{n\geq 1}$ is $2$-log-convex.
\end{proof}

With the aid of the package {\tt P-rec.m}, it is easy to get that $\{B_n\}_{n\geq 882712}$ is $3$-log-convex. We do not prove the $3$-log-convexity of $\{B_n\}_{n\geq 1}$ due to limitations of our equipment.

\section{Conclusion}\label{sec-conclusion}

To conclude this paper, we present three conjectures concerning the $(q,t)$-Hoggatt sums $H_n^{[m]}(q,t)$ and the Baxter numbers $B_n$.

In his PhD thesis, Dilks~\cite{Dilks2015} studied the specialization of $H_n^{[m]}(q,t)$ to $m=1,2,3$ and presented their explicit expressions.
For $m=1$, the $(q,t)$-Hoggatt sum is
$$H_n^{[1]}(q,t)= \sum_{k=0}^{n-1}q^{\binom{k+1}{2}}\qbinomial{n-1}{k}t^k=(1+tq)(1+tq^2)\cdots(1+tq^{n-1}),$$
and it is clear that $\{H_n^{[1]}(q,t) \}_{n\ge 1}$ is a generalized Sturm sequence for any fixed positive integer~$q$.
For $m\ge 2$, Theorem~\ref{thm:interlace-HP} states that when $q=1$, the polynomials $H_n^{[m]}(q,t)$ form a Sturm sequence.
In fact, we observe that this property extends to any positive integer $q$ for $m=2,3$. This leads to the following conjecture which we have verified for $1\le q \le 10$ and $1\le n\le 15$ by {\tt Mathematica}.
\begin{conj}\label{conj-1}
	For any integer $q>0$ and $m=2,3$, the polynomial sequence $\{H_n^{[m]}(q,t)\}_{n\ge 1}$ forms a Sturm sequence.
\end{conj}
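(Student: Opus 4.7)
The most natural route is to lift the proof of Theorem~\ref{thm:interlace-HP} to the $q$-setting, exploiting the fact that Garloff--Wagner's Theorem~\ref{thm-GW} applies to all real polynomials with nonpositive zeros and so carries over verbatim to the specializations $H_n^{[m]}(q,t)$ for any fixed positive integer $q$. The base case $m=1$ requires no work: the factorization $H_n^{[1]}(q,t)=\prod_{k=1}^{n-1}(1+tq^k)$ exhibits simple negative zeros $\{-q^{-k}\}_{k=1}^{n-1}$, and passing from $n$ to $n+1$ adjoins the single smaller root $-q^{-n}$, so $\{H_n^{[1]}(q,t)\}_{n\ge 1}$ is already a generalized Sturm sequence for every $q>0$.

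For $m=2$ the target is a $q$-analog of the identity $(n+1)F_{n,2}(t)=(F_{n,1}(t)\ast F_{n+1,1}(t))'$. Starting from Dilks' explicit form of $H_n^{[2]}(q,t)$ obtained via MacMahon's product formula for plane partitions in a $k\times l\times 2$ box, I would expand the Hadamard product $H_n^{[1]}(q,t)\ast H_{n+1}^{[1]}(q,t)$ and compare it coefficient-by-coefficient with $H_n^{[2]}(q,t)$. The key arithmetic input is the $q$-Pascal identity $[k+1]_q\qbinomial{n+1}{k+1}=[n+1]_q\qbinomial{n}{k}$, which produces extra powers of $q$ that must be absorbed either by the Jackson $q$-derivative $D_q f(t)=(f(qt)-f(t))/((q-1)t)$ or by a rescaling $t\mapsto q^{c}t$. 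Once this identity is in place, Garloff--Wagner gives strict interlacing of consecutive Hadamard products, and a derivative-type statement would transfer it to $\{H_n^{[2]}(q,t)\}_{n\ge 1}$. For $m=3$ the argument then proceeds by induction exactly as in Theorem~\ref{thm:interlace-HP}, with the auxiliary polynomial $G_{n,m}(t)$ replaced by $H_n^{[1]}(q,t)\ast H_{n-1}^{[2]}(q,t)$.

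The main obstacle is the derivative step. For $q=1$, Markov's Theorem~\ref{thm:Markov} is precisely what the argument needs, but for $q\neq 1$ the Jackson $q$-derivative does not obviously preserve $\prec$: its multiplier symbol $\sum_{k\ge 0}[k]_q t^k/k!=(e^{qt}-e^t)/(q-1)$ has purely imaginary zeros and therefore fails to lie in the Laguerre--P\'olya class, so the classical multiplier-sequence machinery cannot be invoked. Proving, for positive $q$ and polynomials with only negative real zeros, that $g\prec f$ implies $D_q g\prec D_q f$ is therefore the crux, and will likely require a direct zero-counting argument tracking how the $q$-secant operator moves roots. Such a $q$-analog of Markov's theorem, even if only established for a single derivative per inductive step, would suffice to settle Conjecture~\ref{conj-1}; the numerical evidence in the ranges $1\le q\le 10$ and $1\le n\le 15$ strongly suggests that both the $q$-Hadamard--derivative identity and the $q$-Markov statement should hold.
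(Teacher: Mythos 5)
This statement is Conjecture~\ref{conj-1} of the paper: the authors do not prove it, they only report numerical verification for $1\le q\le 10$ and $1\le n\le 15$. Your proposal is therefore not being measured against an existing argument, and on its own terms it is a program rather than a proof: the two load-bearing steps --- the $q$-analog of the identity $(n+1)F_{n,\,2}(t)=(F_{n,\,1}(t)\ast F_{n+1,\,1}(t))'$ and the ``$q$-Markov'' interlacing statement for the Jackson derivative --- are both left unestablished. The first is only promised (``I would expand \dots and compare''), and it is not clear that the prefactor $q^{m\binom{k+1}{2}}$ in $M(k,l,m;q)$ together with MacMahon's box formula can be absorbed by a single $q$-derivative or rescaling; this needs to be carried out before anything downstream can be assessed.

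More seriously, the crux lemma you isolate is false as stated. You ask whether, for polynomials with only negative real zeros, $g\prec f$ implies $D_qg\prec D_qf$; but $D_q$ does not even preserve real-rootedness on this class. Take $f(t)=(1+t)^3$. Then
\begin{equation*}
D_qf(t)=3+3(1+q)t+(1+q+q^2)t^2,
\end{equation*}
whose discriminant is $9(1+q)^2-12(1+q+q^2)=-3(q-1)^2<0$ for every $q\neq 1$, so $D_qf$ has non-real zeros. (Equivalently, $\{[k]_q\}_{k\ge 0}$ is not a multiplier sequence for $q\neq 1$: $\sum_k[k]_q\binom{n}{k}t^k=((1+qt)^n-(1+t)^n)/(q-1)$ has non-real zeros.) So the operator you propose to play the role of Markov's theorem cannot do so, and the inductive scheme of Theorem~\ref{thm:interlace-HP} does not lift to general $q$ in the form you describe; any proof of Conjecture~\ref{conj-1} along these lines would need a different interlacing-preserving operator, or a way to exploit the special structure of the particular Hadamard products arising here rather than a general derivative lemma. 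Your base case $m=1$ is fine (and is already observed in the paper), but the conjecture remains open after your argument.
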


For Baxter numbers, Theorem~\ref{thm:asym-r-lgcvBn} suggests the following conjecture.
\begin{conj}\label{conj-2}
	The sequence of Baxter numbers $\{B_n\}_{n\geq 1}$ is infinitely log-convex.
\end{conj}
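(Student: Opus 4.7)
The plan is to strengthen Theorem~\ref{thm:asym-r-lgcvBn} by combining the Hou-Zhang asymptotic framework with the closure of $P$-recursive sequences under the iterated Tur\'{a}n operator $\mathscr{L}$. The strategy is inductive on $r$: assuming $\mathscr{L}^{r-1}(\{B_n\})$ is $P$-recursive and eventually positive, one uses closure properties to produce an explicit $P$-recursive relation for $\mathscr{L}^r(\{B_n\})$, and then applies Theorem~\ref{thm:Hou-Zhang-1} to the ratio $\mathscr{R}^2(\mathscr{L}^r(B_n))$. The base case is already covered by the asymptotic expansion~\eqref{eq-Pt-Bn} of $B_n$ obtained from recurrence~\eqref{eq:rec-Bn}; in the inductive step, if $\mathscr{L}^{r-1}(\{B_n\})$ satisfies a $P$-recursion with polynomial coefficients, then so does $\mathscr{L}^r(\{B_n\})$, and the dominant $n^{-\alpha_r}$ term of the ratio $\mathscr{R}^2(\mathscr{L}^r(B_n))$ can be computed symbolically from that recurrence.

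Concretely, for each $r$ the plan is to (i) use {\tt P-rec.m} or equivalent symbolic tools to produce a recurrence satisfied by $\mathscr{L}^r(B_n)$, (ii) compute a Puiseux-type expansion $\mathscr{R}^2(\mathscr{L}^r(B_n)) = 1 + c_r/n^{\alpha_r} + o(1/n^{\alpha_r})$ to sufficient order, (iii) verify that $c_r > 0$ and $\alpha_r \geq 2$ so that Theorem~\ref{thm:Hou-Zhang-1} yields eventual positivity of $\mathscr{L}^{r+1}(\{B_n\})$, and (iv) apply the {\tt rLogBound} procedure to obtain an explicit threshold $N_r$ and numerically verify the finitely many initial terms $\mathscr{L}^r(B_n)$ for $n < N_r$. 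Together, these steps would upgrade asymptotic $r$-log-convexity to actual $r$-log-convexity for every $r$, which is precisely infinite log-convexity.

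The main obstacle is the uniformity of this program in $r$. The order of the $P$-recursion governing $\mathscr{L}^r(\{B_n\})$ grows with $r$, the threshold $N_r$ produced by {\tt rLogBound} grows rapidly (the remark following the proof of Theorem~\ref{thm:asym-r-lgcvBn} already observed an $r=3$ threshold of $882712$ that was computationally infeasible to verify by hand), and one must guarantee that the leading coefficient $c_r$ stays positive for every $r$, not merely for finitely many tested values. A uniform resolution would require either a closed-form description of the family of recurrences governing $\{\mathscr{L}^r(B_n)\}_{r\geq 1}$, or a bootstrap lemma that converts a sufficiently strong finite log-convexity condition together with the Baxter recurrence~\eqref{eq:rec-Bn} into the infinite one; identifying and proving such a lemma tailored to this recurrence appears to be the genuinely new ingredient required.
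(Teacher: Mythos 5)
This statement is a conjecture in the paper: the authors offer no proof of it, and in fact they explicitly note that the most natural sufficient condition --- being a Stieltjes moment sequence, which would imply infinite log-convexity by \cite{WangZhu2016} --- fails for $\{B_n\}$ because the fifth-order leading principal minor of its Hankel matrix is negative. So the right benchmark here is not ``does your proof match the paper's'' but ``does your argument actually close the conjecture,'' and it does not.

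Your program is essentially the one the authors already carried out for Theorem~\ref{thm:asym-r-lgcvBn}: for each fixed $r$, use the holonomic closure properties to get a recurrence for $\mathscr{L}^r(B_n)$ (or, more directly, apply Theorem~\ref{thm:Hou-Zhang-1} with $\beta = 2r$ to get asymptotic $r$-log-convexity in one shot, without iterating $\mathscr{L}$ at all), then use {\tt rLogBound} to get a threshold $N_r$ and check the finitely many initial terms. This yields, for each \emph{individual} $r$, a finite computation --- but infinite log-convexity requires all $r$ simultaneously, and there are infinitely many such computations, each with a rapidly growing and eventually infeasible threshold (the paper already reports $N_3 = 882712$ as beyond their reach). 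You correctly identify this in your final paragraph: what is needed is either a uniform description of the recurrences for $\mathscr{L}^r(B_n)$ with a proof that $c_r>0$ and a bound on $N_r$ valid for all $r$, or a bootstrap lemma converting finite information into the infinite statement. Neither ingredient is supplied, so the proposal is a (correct) restatement of why the problem is hard rather than a proof; the conjecture remains open after your argument exactly as it does in the paper.
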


It is known that a Stieltjes moment sequence is infinitely log-convex~\cite{WangZhu2016}. However, we observe that the sequence $\{B_n\}_{n\ge 1}$ fails to be a Stieltjes moment sequence, since the fifth-order leading principal minor of its Hankel matrix is negative.

Note that the Baxter numbers can be treated as the specialization of the classical Hoggatt sums $H_n^{[m]}(1,1)$ at $m=3$. Therefore, we propose a more general conjecture as follows.

\begin{conj}\label{conj-H-vex}
	The sequence of $(1,1)$-Hoggatt sums $\{H_n^{[m]}(1,1)\}_{n\ge 1}$ is infinitely log-convex for any fixed $m\geq 2$.
\end{conj}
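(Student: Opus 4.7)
The plan is to extend the asymptotic approach used for Theorem~\ref{thm:asym-r-lgcvBn} from the Baxter case ($m=3$) to arbitrary $m\ge 2$, while confronting the additional difficulty that infinite log-convexity requires $r$-log-convexity for \emph{every} $r$, not just for finitely many.

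First, for each fixed $m\ge 2$, I would obtain a $P$-recurrence for $H_n^{[m]}(1,1)$. Since
\begin{align*}
H_n^{[m]}(1,1) = \frac{1}{\prod_{j=1}^{m-1}\binom{n+m-2}{j}}\sum_{k=0}^{n-1}\prod_{i=0}^{m-1}\binom{n+m-2}{k+i}
\end{align*}
is a sum of hypergeometric terms in $k$, Zeilberger's creative telescoping algorithm yields such a recurrence. The Baxter recurrence~\eqref{eq:rec-Bn} is the $m=3$ instance, and analogous (in general higher-order) recurrences can be generated mechanically for larger $m$. Next, I would feed this recurrence into the Birkhoff--Trjitzinsky--Wimp--Zeilberger asymptotic machinery to produce an expansion
\begin{align*}
H_n^{[m]}(1,1)=C_m\,\lambda_m^n\, n^{\gamma_m}\left(1+\frac{c_1}{n}+\frac{c_2}{n^2}+\cdots+o\!\left(\frac{1}{n^K}\right)\right),
\end{align*}
from which the induced Puiseux-type expansion of $\mathscr{R}^2(H_n^{[m]}(1,1))$ can be computed. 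One expects, by analogy with the $m=3$ case (leading correction $4/n^2$), a leading behavior of the form $1+c\,n^{-2}+\cdots$ with $c>0$. Taking $K$ arbitrarily large and invoking Theorem~\ref{thm:Hou-Zhang-1} then gives asymptotic $r$-log-convexity for every $r\ge 1$. For each fixed $r$, the remaining gap to genuine $r$-log-convexity can in principle be closed by running {\tt rLogBound} to extract an explicit threshold $N_r^{(m)}$ and then verifying the finitely many initial values.

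The main obstacle is that infinite log-convexity demands the above program succeed \emph{simultaneously} for every $r$. Already for $m=3$, the threshold for $3$-log-convexity exceeds $880{,}000$, and $N_r^{(m)}$ is expected to grow rapidly in $r$, so a case-by-case computational proof is not viable. A uniform, conceptual argument is needed. Since the paper observes that $\{B_n\}$ is not a Stieltjes moment sequence, the classical moment route is blocked; promising alternatives are: (a) finding a totally positive integral transform sending a known infinitely log-convex seed to $\{H_n^{[m]}(1,1)\}$; (b) constructing a Jacobi-type continued fraction for the ordinary generating function with suitably nonnegative Jacobi parameters; or (c) an inductive algebraic argument tied to the $P$-recurrence showing that nonnegativity of $\mathscr{L}^r$ propagates to $\mathscr{L}^{r+1}$. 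Identifying such a uniform structure, valid for all $m\ge 2$, is where I expect the real difficulty to lie.
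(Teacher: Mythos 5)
This statement is an open conjecture: the paper does not prove it, and neither does your proposal. What the paper offers is exactly the partial evidence you describe --- for $m=2$ the sequence $\{H_n^{[2]}(1,1)\}$ consists of the Catalan numbers, which form a Stieltjes moment sequence and are therefore infinitely log-convex; for fixed $m\geq 4$ and fixed $r$ the authors note that the asymptotic machinery of Theorem~\ref{thm:Hou-Zhang-1} applies (they report that $\{H_n^{[4]}(1,1)\}_{n\ge 9632}$ is log-convex and $\{H_n^{[4]}(1,1)\}_{n\ge 404602}$ is $2$-log-convex, with the initial-value check completed only for $r=1$). Your program reproduces this evidence faithfully, and your diagnosis of the obstruction is the correct one: Theorem~\ref{thm:Hou-Zhang-1} yields only \emph{asymptotic} $r$-log-convexity for each fixed $r$, the thresholds $N_r^{(m)}$ grow too fast for exhaustive verification, and no uniform structural argument (moment representation, totally positive transform, continued fraction) is currently known. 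Indeed, the moment route fails already at $m=3$, since the paper observes that $\{B_n\}$ is not a Stieltjes moment sequence.

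So the gap in your proposal is not a local error but the absence of the one idea that would make it a proof: a mechanism that controls $\mathscr{L}^r$ uniformly in $r$. None of your three suggested alternatives (a)--(c) is carried out, and (c) in particular is not known to be feasible --- there is no general principle by which nonnegativity of $\mathscr{L}^r$ applied to a $P$-recursive sequence propagates to $\mathscr{L}^{r+1}$; the iterates $\mathscr{L}^j(\{a_n\})$ of a $P$-recursive sequence need not even remain $P$-recursive, so the recurrence-based induction you sketch has no obvious starting point. You should present this as a conjecture supported by the evidence above, not as a theorem with a proof outline.
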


The conjecture holds for $m=2$ since the numbers $H_n^{[2]}(1,1)$ are the Catalan numbers. This sequence is known to be a Stieltjes moment sequence and hence is infinitely log-convex~\cite{Liang2016}. For any fixed $m\geq4$, one may adapt a similar method used in the proof of Theorem~\ref{thm:asym-r-lgcvBn} to obtain the asymptotic $r$-log-convexity of $H_n^{[m]}(1,1)$. However, we have not found a unified approach that works for arbitrary $m$. Applying the package {\tt P-rec.m}, one can obtain that $\{H_n^{[4]}(1,1)\}_{n\ge 9632}$ is log-convex and $\{H_n^{[4]}(1,1)\}_{n\ge 404602}$ is $2$-log-convex. We have verified that all initial $9,632$ values of $\mathscr{R}^2(H_n^{[4]}(1,1))$ exceed one. Thus, $\{H_n^{[4]}(1,1)\}_{n\ge 1}$ is log-convex. We do not check the initial $404,602$ values of $\mathscr{R}^2\mathscr{L}(H_n^{[4]}(1,1))$ because the limitations of our computers. All of the above results suggest that Conjecture~\ref{conj-H-vex} is probably valid.

\section*{Acknowledgments}
We would like to thank Shaoshi Chen, Qing-Hu Hou, Arthur L.B. Yang and Zuo-Ru Zhang for valuable suggestions and helpful comments.
Hanqian Fang and Candice X.T. Zhang were partially supported by the National Key R\&D Program of China (No. 2023YFA1009401), the NSFC grant (No. 12271511), the CAS Funds of the Youth Innovation Promotion Association (No. 2022001), and the Strategic Priority Research Program of the Chinese Academy of Sciences (No. XDB0510201). Candice X.T. Zhang was also supported by the Postdoctoral Fellowship Program of CPSF under Grant Number: GZC20241868.

\end{document}